\newtheorem{theorem}{Theorem}
\newtheorem{corollary}{Corollary}
\newtheorem{lemma}{Lemma}
\newtheorem{proposition}{Proposition}
\newtheorem{remark}{Remark}
\newenvironment{proof}[1][Proof]{\noindent\textbf{#1.} }{\ \rule{0.5em}{0.5em}}
\def\F{\mathcal{F}_{q,v}}
\def\R{\mathbb{R}_q}
\def\RR{\mathbb{R}_q^+}
\def\C{\mathbb{C}}
\def\r{\mathbb{R}}
\def\Z{\mathbb{Z}}
\def\N{\mathbb{N}}
\def\I{\infty}
\def\L{\mathcal{L}_{q,1,v}}
\def\l2{\mathcal{L}_{q,2,v}}
\def\lp{\mathcal{L}_{q,p,v}}
\def\la{\mathcal{L}_{q,a}^v}
\def\La{\mathcal{L}_{q,v,a}}
\def\a{\aligned}
\def\ea{\endaligned}
\def\P{PW_{q,a}^v}
\def\Pb{PW_{q,b}^v}
\begin{document}

\title{\bf\vspace{-39pt} Prolate Spheroidal Wave Functions In $q$-Fourier Analysis}

\author{Lazhar Dhaouadi\\
\small Mathematics Department\\
Institut Pr\'eparatoire aux Etudes d'Ing\'enieur de Bizerte\\
Route Menzel Abderrahmene Bizerte\\
\small Zarzouna, 7021, Tunisia\\
\small lazhardhaouadi@yahoo.fr}

\date{}

\maketitle \thispagestyle{fancy}

\begin{abstract}
In this paper we introduce a new version of the Prolate spheroidal
wave function using standard methods of $q$-calculus and we
formulate some of its properties. As application we give a
$q$-sampling theorem which extrapolates functions defined on $q^n$
and $0<q<1$.
\vspace{5mm} \\
\noindent {\it Keywords : q-Prolate spheroidal wave function,
q-sampling, }
\vspace{3mm}\\
\noindent {\it 2000 AMS Mathematics Subject Classification---Primary
33D15,47A05. }
\end{abstract}

\section {Introduction}

The prolate spheroidal wave functions, which are a special case of
the spheroidal wave functions, possess a very surprising and unique
property \cite{W}. They are an orthogonal basis of both $L^2(-1,1)$
and the Paley-Wiener space of bandlimited functions. They also
satisfy a discrete orthogonality relation. No other system of
classical orthogonal functions is known to possess this strange
property. We prove that there are new systems possessing this
property in $q$-Fourier analysis. In the following we discuss some
properties of the $q$-Prolate spheroidal wave function using news
developments and technics in q-Fourier analysis. In particular we
prove that these functions forms an orthogonal basis of the
$q$-Paley-Wiener space $PW_{q,a}^v$. Finally and as application we
give a constructive $q$-sampling formula having as sampling points
$q^n$ where $n\in\Z$. In the end, we cit the reference \cite{A},
where the reproducing kernel for the $q$-Paley-Wiener space was
already discussed, and the explicit formula for the kernel was
given, similar to the formula in Remark \ref{r3}. However, the paper
\cite{A} proceeds with a $q$-sampling theorem which extrapolates
functions defined on the zeros of the $q$-Bessel function. These
zeros are given in the following form
$$
\left\{q^{-n+\epsilon_n}\right\}_{n\in\N},
$$
where $0<\epsilon_n<1$, but it is not explicitly  evaluated.

\section {Preliminary}
Throughout this paper we consider $0<q<1$ and we adopt the standard
conventional notations of \cite{G}. We put
$$
\R=\{\pm q^n,\quad n\in\Z\} ,\quad\RR=\{q^n,\quad n\in\Z\},
$$
and if $a=q^n,\quad n\in\Z$ put
$$
[0,a]_q=\{q^s,\quad s\in\Z,~~s\geq n\}.
$$
For complex $z$, let
$$
(z;q)_0=1,\quad (z;q)_n=\prod_{i=0}^{n-1}(1-zq^{i}), \quad
n=1...\infty.
$$
Jackson's $q$-integral in the interval $[0,a]$ and in the interval
$[0,\I[$ are defined, respectively, by(see \cite{J})
$$
\aligned
\int_0^af(x)d_qx&=(1-q)a\sum_{n=0}^\I q^nf(aq^n),\\
\int_0^\infty f(x)d_qx&=(1-q)\sum_{n=-\I}^\I q^nf(q^n).
\endaligned
$$
For $v>-1$, let $\lp$ be the space of even functions $f$ defined on
$\R$ such that
$$
 \|f\|_{q,p,v}=\left[\int_0^{\infty}|f(x)|^px^{2v+1}d_qx\right]^{1/p}<\infty.
$$
The set $\l2$ is an Hilbert space with the inner product
$$
\langle f,g\rangle=\int_0^\I f(t)g(t)t^{2v+1}d_qt.
$$
We consider $\La$ the space of function defined on $[0,a]_q$ which
satisfies
$$
\int_0^a|f(x)|^2x^{2v+1}d_qx<\I,
$$
and $\la$ the subspace of $\l2$ given by the natural embedding of
$\La$ in $\l2$.

\bigskip

The normalized Hahn-Exton $q$-Bessel function of order $v>-1$ (see
\cite{S}) is defined by
$$
j_v(z,q)=\sum_{n=0}^{\infty }(-1)^{n}\frac{q^{\frac{n(n+1)}{2}}}{%
(q,q)_{n}(q^{v +1},q)_{n}}z^{2n}.
$$
It is an entire analytic function in $z$.

\begin{proposition}\label{p1}
For  $\Re(v)>-1,a>0$ and $y,z\in\C\backslash\{0\}$ we have
$$
\aligned
&\int_0^aj_v(yt,q^2)j_v(zt,q^2)t^{2v+1}d_qt\\
&=\frac{1-q}{1-q^{2v+2}}a^{2v+2}\quad
\frac{y^2j_{v+1}(ay,q^2)j_v(aq^{-1}z,q^2)-z^2j_{v+1}(az,q^2)j_v(aq^{-1}y,q^2)}{y^2-z^2}.
\endaligned
$$
\end{proposition}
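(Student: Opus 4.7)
The strategy is to recognize this as a Christoffel--Darboux type identity stemming from the $q$-Sturm--Liouville equation satisfied by the Hahn--Exton $q$-Bessel function. Writing $u_\lambda(t):=j_v(\lambda t,q^2)$, one can verify (either by matching coefficients in the defining power series, or by invoking the standard $q$-Bessel equation) that $u_\lambda$ satisfies a second-order $q$-difference equation of the form $\mathcal{L}_q u_\lambda = -\lambda^2 u_\lambda$, where $\mathcal{L}_q$ admits a self-adjoint formulation with weight $t^{2v+1}$.

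Next I would form the $q$-Wronskian
$$
W(y,z;t) \;=\; t^{2v+1}\bigl[u_y(t)\,\partial_q u_z(t) \,-\, u_z(t)\,\partial_q u_y(t)\bigr],
$$
and check, by substituting the $q$-difference equation, a Green-type identity
$$
\partial_q W(y,z;\cdot)(t) \;=\; (z^2-y^2)\,t^{2v+1}\,u_y(t)u_z(t),
$$
possibly up to an immaterial $q$-shift in the argument that can be absorbed after Jackson integration. Applying $\int_0^a\cdot\,d_q t$ then telescopes the left-hand side to $W(y,z;a)$, since $t^{2v+1}\to 0$ at the lower endpoint for $v>-1$ while $u_\lambda(0)=1$ and $\partial_q u_\lambda$ remains bounded. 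The right-hand side yields exactly $(z^2-y^2)$ times the Jackson integral in the statement.

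It remains to evaluate $W(y,z;a)$ explicitly. For this I would use the standard differentiation formula of the form
$$
\partial_q\bigl[j_v(\lambda t,q^2)\bigr] \;=\; -\,\frac{\lambda^2 t}{1-q^{2v+2}}\,j_{v+1}(\lambda t,q^2),
$$
together with a contiguous relation linking $j_v(\lambda a,q^2)$ to $j_v(\lambda a q^{-1},q^2)$; this is precisely the step where the asymmetric factor $aq^{-1}$ in the statement enters. Substituting these into $W(y,z;a)$ and dividing by $z^2-y^2$ should produce the claimed ratio, with the scalar prefactor $(1-q)a^{2v+2}/(1-q^{2v+2})$ appearing naturally from the denominator $1-q^{2v+2}$ in the derivative formula combined with the Jackson weight.

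The main obstacle is not conceptual but rather bookkeeping: ensuring that the $q$-shifts introduced by $\partial_q$ (which involve both $t$ and $qt$) and by the self-adjointness manipulation combine to yield precisely the antisymmetric boundary expression $y^2 j_{v+1}(ay,q^2)j_v(aq^{-1}z,q^2) - z^2 j_{v+1}(az,q^2)j_v(aq^{-1}y,q^2)$, with no residual $q$-powers. A prudent check is to specialize to $y=z$ (via L'Hospital in the ratio) and compare against the squared $L^2_q$-norm computed directly from the series, which would confirm that all prefactors and shifts have been tracked correctly.
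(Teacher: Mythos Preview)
The paper does not actually prove this proposition: its entire proof is the single line ``See \cite{Ko} (Proposition 1.3),'' i.e.\ a citation to Koelink--Swarttouw. Your outline---deriving a $q$-Lagrange/Green identity from the second-order $q$-difference equation satisfied by $j_v(\lambda t,q^2)$, integrating so that the $q$-Wronskian telescopes to its boundary value at $t=a$, and then evaluating that boundary term via the lowering relation that produces $j_{v+1}$---is precisely the method used in that reference, so your approach is correct and coincides with the cited source. Your caveat about the $q$-shift bookkeeping is well placed: in Koelink--Swarttouw the Green identity is set up so that the boundary expression naturally involves $j_v$ at the shifted argument $aq^{-1}$ paired with $j_{v+1}$ at $a$, exactly as in the statement, and the constant $(1-q)/(1-q^{2v+2})$ comes out of the derivative formula as you anticipate.
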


\begin{proof}
See \cite{Ko} (Proposition 1.3)
\end{proof}

\bigskip
The following results in this section were proved in  \cite{D}.

\begin{proposition}\label{p2}
$$
|j_v(q^n,q^2)|\leq\frac{(-q^2;q^2)_\I(-q^{2v+2};q^2)_\I}{(q^{2v+2};q^2)_\I}\left\{
\begin{array}{c}
  1\quad\quad\quad\quad\quad\text{if}\quad n\geq 0 \\
  q^{n^2+(2v+1)n}\quad\text{if}\quad n<0
\end{array}
\right..
$$
\end{proposition}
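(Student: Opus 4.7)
The plan is to estimate the defining series
$$
j_v(q^n,q^2)=\sum_{k=0}^\infty(-1)^k\frac{q^{k(k+1)+2nk}}{(q^2;q^2)_k(q^{2v+2};q^2)_k}
$$
by different techniques in the two cases.

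For $n\ge 0$ one has $q^{2nk}\le 1$, so the triangle inequality yields
$$
|j_v(q^n,q^2)|\le\sum_{k\ge 0}\frac{q^{k(k+1)}}{(q^2;q^2)_k(q^{2v+2};q^2)_k}.
$$
Writing $(q^{2v+2};q^2)_k=(q^{2v+2};q^2)_\infty/(q^{2v+2+2k};q^2)_\infty$ and observing $(q^{2v+2+2k};q^2)_\infty\le 1\le(-q^{2v+2};q^2)_\infty$, one extracts the factor $(-q^{2v+2};q^2)_\infty/(q^{2v+2};q^2)_\infty$ from the sum. The remaining series equals $(-q^2;q^2)_\infty$ by Euler's identity $\sum_{k\ge 0}q^{k(k-1)/2}z^k/(q;q)_k=(-z;q)_\infty$ specialized to $q\to q^2$, $z=q^2$, giving exactly the stated constant.

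For $n<0$, write $n=-m$ with $m\ge 1$. Here the triangle inequality fails dramatically: the summand has magnitude of order $q^{k(k+1)-2mk}$, reaching $q^{m-m^2}$ near its saddle $k\approx m$, whereas the target bound is $q^{m^2-(2v+1)m}$. So the alternating sum must cancel by a factor on the order of $q^{-2m^2+(2v+2)m}$ that no termwise estimate can exhibit, even after reindexing $k=m+j$ to move the saddle to $j=0$. My plan is to rewrite $j_v(q^{-m},q^2)={}_1\phi_1(0;q^{2v+2};q^2,q^{2-2m})$ via a Heine- or Sears-type $q$-hypergeometric transformation exchanging large and small argument, producing an equivalent series whose generic term is already of size $q^{m^2-(2v+1)m}$; the triangle inequality on the transformed series, combined with the Pochhammer and Euler bounds above, then closes the estimate.

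The main obstacle is pinning down the correct transformation. A naive induction on $m$ using the contiguous relation $j_v(q^{-1}z,q^2)=j_v(z,q^2)-z^2j_{v+1}(z,q^2)/(1-q^{2v+2})$ (which follows by a quick term-by-term manipulation) does not close by itself, because each of the two terms on the right-hand side individually exceeds the target bound and they decay only in combination via mutual cancellation. Since Proposition~\ref{p2} is attributed to \cite{D}, my first practical step would be to consult that reference for the precise identity enabling the $n<0$ case.
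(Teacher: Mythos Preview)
The paper does not actually prove this proposition: it merely states, just before Proposition~\ref{p2}, that ``the following results in this section were proved in \cite{D}.'' So there is no in-paper argument to compare against; the paper's ``proof'' is exactly the step you end with yourself, namely consulting \cite{D}.

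That said, here is how your proposal stands on its own. Your treatment of the case $n\ge 0$ is correct and complete: the triangle inequality, the bound
\[
\frac{1}{(q^{2v+2};q^2)_k}=\frac{(q^{2v+2+2k};q^2)_\infty}{(q^{2v+2};q^2)_\infty}\le\frac{(-q^{2v+2};q^2)_\infty}{(q^{2v+2};q^2)_\infty},
\]
and Euler's identity $\sum_{k\ge 0}q^{k(k+1)}/(q^2;q^2)_k=(-q^2;q^2)_\infty$ combine to give exactly the stated constant. This is already more than the paper offers.

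For $n<0$ you have correctly diagnosed the obstruction --- the alternating sum must absorb cancellation of order $q^{-2m^2+(2v+2)m}$, which no termwise estimate can produce --- and you have named the right \emph{type} of tool, a ${}_1\phi_1$ large-argument transformation. But you stop short of identifying or applying any specific identity, and the contiguous-relation approach you sketch is, as you note, insufficient. So the $n<0$ case in your write-up is a strategy, not a proof. Since that is precisely the half where the content lies, your proposal is genuinely incomplete there; it simply defers to \cite{D}, which is also all the paper does.
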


\bigskip
The $q$-Bessel Fourier transform $\F$ introduced in
\cite{D},\cite{K} as follow
$$
\F f(x)=c_{q,v }\int_{0}^{\infty }f(t)j_{v }(xt,q^{2})t^{2v
+1}d_{q}t,
$$
where
$$
c_{q,v }=\frac{1}{1-q}\frac{(q^{2v +2},q^{2})_{\infty }}{%
(q^{2},q^{2})_{\infty }}.
$$
The $q-$Bessel translation operator is defined as follows:
$$
T^v_{q,x}f(y)=c_{q,v}\int_0^\I\F(f)(t)j_v(xt,q^2)j_v(yt,q^2)t^{2v+1}d_qt,\quad\forall
x,y\in\R,\forall f\in\L,
$$
Recall that $T^v_{q,x}$ is said positive if $T^v_{q,x}f\geq 0$ for
$f\geq 0$. In the following we tack $q\in Q_v$ where
$$
Q_v=\{q\in ]0,1[,\quad T^v_{q,x}\quad \text{is positive for
all}\quad x\in\R\}.
$$
The $q-$convolution product of both functions $f,g\in\L$ is defined
by
$$
f*_qg(x)=c_{q,v}\int_0^\I T^v_{q,x}f(y)g(y)y^{2v+1}d_qy.
$$

\begin{theorem}\label{t1}
The operator $\F$ satisfying

\bigskip

1. For all functions $f\in\l2$,\quad $\F^2f(x)=f(x),\quad\forall
x\in\R.$

2. For all functions  $f,g\in\l2$,\quad $\langle \F f,
g\rangle=\langle  f, \F g\rangle$.

3. For all functions  $f\in\l2$,\quad $\|\F
f\|_{q,v,2}=\|f\|_{q,v,2}$.

4. For all functions  $f,g\in\L$,$$\F (f*_qg)(x)=\F f(x)\times\F
g(x),\quad\forall x\in\R.$$
\end{theorem}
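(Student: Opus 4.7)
The four assertions form a cascade. Claim (2) is a direct Fubini interchange; claim (1) reduces, again by Fubini, to a discrete orthogonality of $j_v(\cdot,q^2)$ on $\RR$; claim (3) follows immediately from (1) and (2); claim (4) follows from (1) together with the symmetry of $T^v_{q,x}$ in $(x,y)$. The substance is therefore in (1).

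For (2), I would expand $\langle\F f,g\rangle$ as a double Jackson integral. The bounds in Proposition \ref{p2} on $|j_v(q^n,q^2)|$ combined with Cauchy--Schwarz in $\l2$ make the double $q$-series absolutely convergent, so the two $q$-sums may be swapped to produce $\langle f,\F g\rangle$. Once (1) and (2) are in hand, (3) is the standard Parseval consequence:
$$\|\F f\|_{q,v,2}^2=\langle\F f,\F f\rangle=\langle f,\F^2 f\rangle=\|f\|_{q,v,2}^2.$$

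For (1), the target would be the discrete orthogonality
$$c_{q,v}\int_0^\infty j_v(xt,q^2)\,j_v(st,q^2)\,t^{2v+1}\,d_qt=\frac{\delta_{x,s}}{(1-q)\,x^{2v+2}}\qquad(x,s\in\RR),$$
since writing $\F^2 f(x)$ as a double $q$-integral and Fubini-swapping collapses it to $f(x)$ exactly when this orthogonality holds. To prove the orthogonality I would truncate to $[0,q^{-N}]_q$, apply Proposition \ref{p1} with $a=q^{-N}$, and let $N\to\infty$. For $x=q^k\neq q^l=s$, the super-exponential decay of $j_v$ and $j_{v+1}$ at large positive arguments furnished by Proposition \ref{p2} (a factor of order $q^{N^2}$) overwhelms the growth of the prefactor $a^{2v+2}=q^{-N(2v+2)}$, so the right-hand side of Proposition \ref{p1} vanishes in the limit. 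For $x=s$ one first passes to the diagonal by an l'H\^opital step in $z\to y$ inside Proposition \ref{p1} and then takes $N\to\infty$, and must extract the exact constant $1/[(1-q)x^{2v+2}]$ from the resulting derivative expression. \emph{This diagonal limit is the main obstacle}, as it requires a more delicate asymptotic than the off-diagonal case.

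For (4), the defining formula for $T^v_{q,x}f(y)$ is symmetric in $(x,y)$, hence $T^v_{q,x}f(y)=T^v_{q,y}f(x)$. Rewriting the definition as $T^v_{q,y}f=\F[\F f(\cdot)\,j_v(y\cdot,q^2)]$ and applying $\F$ once more, claim (1) yields $\F(T^v_{q,y}f)(u)=j_v(yu,q^2)\,\F f(u)$. Substituting the definition of $*_q$ into $\F(f*_qg)(u)$ and exchanging the order of the two $q$-integrations (valid for $f,g\in\L$),
$$\F(f*_qg)(u)=c_{q,v}\int_0^\infty g(y)\,\F(T^v_{q,y}f)(u)\,y^{2v+1}\,d_qy=\F f(u)\cdot\F g(u),$$
which closes the argument.
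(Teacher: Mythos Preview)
The paper does not prove Theorem~\ref{t1}: the sentence immediately preceding it reads ``The following results in this section were proved in~\cite{D},'' and no argument is given in the present paper. Hence there is no in-paper proof to compare your proposal against.

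On the merits of your sketch: parts~(2), (3), and~(4) are routine and your outline is sound. Your reduction of~(1) to the discrete orthogonality relation for $j_v$ on $\RR$ is exactly the right target, and this relation is indeed the content of the Koornwinder--Swarttouw inversion theorem~\cite{K}. However, the strategy you propose for the diagonal case --- applying l'H\^opital to Proposition~\ref{p1} and then sending $N\to\infty$ --- is genuinely delicate: after differentiation the numerator involves products like $j_{v+1}(aq^n,q^2)\,\partial_z j_v(aq^{-1}z,q^2)\big|_{z=q^n}$ evaluated at $a=q^{-N}$, and one must show that all but one asymptotic contribution cancel to leave precisely $1/[(1-q)x^{2v+2}]$. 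You correctly flag this as the main obstacle, but as written it remains a gap rather than a proof. The original argument in~\cite{K} (and the treatment in~\cite{D}) proceeds instead via the explicit $q$-series for $j_v$ and a Ramanujan-type summation, bypassing this limiting procedure entirely.
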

In the end we consider $\P$ the $q$-Paley Wiener space
$$
\P=\left\{f(x)=\int_0^a u(t)j_v(xt,q^2)t^{2v+1}d_qt,\quad u\in\la
\right\},
$$
the set of $q$-bandlimited signal.

\section {Main Results}

We introduce the $q$-analogue of the Prolate Spheroidal Wave
Functions $\psi_i$ as the eigenfunction of the integral operator
$T_a^v$ acting on the Hilbert space $\La$ as follows
$$
T_a^vu(x)=c_{q,v}\int_0^a u(t)j_v(xt,q^2)t^{2v+1}d_qt,
$$
then we have
$$
T_a^v\psi_i=\lambda_i\psi_i.
$$
It's easy to see that the operator $T_a^v$ is symmetric and compact
$$
\int_0^a T_a^vu(t)w(t)t^{2v+1}d_qt=\int_0^a
u(t)T_a^vw(t)t^{2v+1}d_qt,
$$
then the sequence $\left\{\psi_i\right\}_{i\in\N}$ forme an
orthogonal basis of the Hilbert space $\La$ and any eigenvalue
$\lambda_i$ is real.
\begin{proposition}\label{p3}
The sequence of eigenvalue $\{\lambda_i\}_{i\in\N}$ satisfying
$$
\lambda^2_0\geq\lambda^2_1\geq\ldots>0.
$$
\end{proposition}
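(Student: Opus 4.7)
The plan is to combine the standard spectral theorem for compact self-adjoint operators with a separate argument showing that $T_a^v$ is injective on $\mathcal{L}_{q,v,a}$.

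Since $T_a^v$ is compact and self-adjoint (both noted in the paragraph just preceding the proposition), the spectral theorem furnishes a countable family of real eigenvalues whose only possible accumulation point is $0$. Enumerating them in decreasing order of absolute value yields $|\lambda_0|\ge|\lambda_1|\ge\cdots$, equivalently $\lambda_0^2\ge\lambda_1^2\ge\cdots\ge 0$. The only non-routine point is therefore to prove that no eigenvalue vanishes, i.e.\ that $\ker T_a^v=\{0\}$.

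To prove injectivity, suppose $u\in\mathcal{L}_{q,v,a}$ satisfies $T_a^vu(x)=0$ for every $x\in[0,a]_q$. Extend $u$ by zero to a function $\tilde u\in\mathcal{L}_{q,2,v}$ on $\RR$; then, directly from the definitions, $\F(\tilde u)(x)=T_a^vu(x)$ for every $x\in\RR$. Thus $\F(\tilde u)$ vanishes on the set $\{aq^n\}_{n\ge 0}$, which has $0$ as an accumulation point.

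The key technical step is to observe that $x\mapsto T_a^vu(x)$ extends to an entire function of $x$. Writing the Jackson $q$-integral as a sum, substituting the power series of $j_v(xt,q^2)$, and swapping the two sums, one obtains a power series in $x$ whose $k$-th coefficient is a factor $q^{k(k+1)}/\bigl[(q^2;q^2)_k(q^{2v+2};q^2)_k\bigr]$ (from the expansion of $j_v$) multiplied by a sum of the form $\sum_n q^{(2v+2+2k)n}u(aq^n)$. Cauchy--Schwarz combined with $\|u\|_{q,v,a}<\infty$ bounds this inner sum uniformly in $k$, while the $q^{k(k+1)}$ factor provides super-exponential decay; interchange of summation is justified and the series converges for every $x\in\C$. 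An entire function vanishing on a set with an accumulation point is identically zero, so $\F(\tilde u)(x)=0$ for every $x\in\RR$. By Theorem~\ref{t1}(1), $\F^2=I$ on $\l2$, hence $\F$ is injective; so $\tilde u=0$, whence $u=0$. Therefore $\ker T_a^v=\{0\}$, every $\lambda_i$ is nonzero, and the strict inequality $\lambda_i^2>0$ holds for all $i$.

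The hardest part is the analyticity/identity step in the previous paragraph: verifying that the $q$-series defining $T_a^vu(x)$ truly converges for all complex $x$ to an entire function, so that the vanishing on $\{aq^n\}_{n\ge 0}$ forces vanishing everywhere. Once this is in place, Theorem~\ref{t1}(1) closes the argument.
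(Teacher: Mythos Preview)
Your argument is correct and follows essentially the same route as the paper: invoke the spectral theorem for compact self-adjoint operators to get the ordering $|\lambda_0|\ge|\lambda_1|\ge\cdots$, and rule out $0$ as an eigenvalue by observing that $T_a^v u=\F\tilde u$ extends to an entire function vanishing on the accumulating set $[0,a]_q$, hence everywhere, whence $u=0$ by injectivity of $\F$. The paper's proof is terser---it simply asserts that $\F\psi$ is entire and applies the identity theorem---whereas you spell out the analyticity via the power-series/Cauchy--Schwarz estimate and explicitly cite Theorem~\ref{t1}(1) for the final step, which is a welcome addition of detail rather than a different strategy.
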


\begin{proof}
The  operator $T_a^v$ is compact, then the spectrum is a countably
infinite subset of $\r$ ($T_a^v$ is symmetric) which has $0$ as its
only limit point. If we denote by
$$
\Lambda=\{\lambda_0,\lambda_1,\ldots\},
$$
the spectrum of $T_a^v$ then we can write
$$
|\lambda_0|\geq|\lambda_1|\geq\ldots\geq 0.
$$
To finish the proof, if suffice to prove that $0\notin\Lambda$. In
fact if $T_a^v\psi=0$ then $\F\psi$ is an entire function which
vanishes on $[0,a]_a$. By the identity theorem for analytic
functions, $\F\psi=0$ everywhere and thus $\psi=0$.
\end{proof}

\begin{remark}\label{r1}
Consider the operator
$$
k_a^v=T_a^v\circ T_a^v,
$$
then $K_a^v$ is an integral operator acting on the Hilbert space
$\La$ as follows
$$
k_a^vu(x)=\int_0^au(y)k(x,y)y^{2v+1}d_qy,
$$
where
$$
k(x,y)=c_{q,v}^2\int_0^aj_v(xt,q^2)j_v(yt,q^2)t^{2v+1}t^{2v+1}d_qt.
$$
The function $\psi_i$ is an eigenfunction of $k_a^v$
$$
k_a^v\psi_i=\lambda_i^2\psi_i.
$$
\end{remark}

\begin{lemma}\label{l1}
The function $\psi_i$ initially defined on $\R$ can be extended as
an analytic function on $\C$.
\end{lemma}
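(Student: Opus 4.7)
The strategy is to exhibit $\psi_i$ explicitly as an entire function of $z$ by exploiting the eigenfunction relation. Since Proposition \ref{p3} guarantees $\lambda_i \neq 0$, we may write
$$
\psi_i(x) = \frac{1}{\lambda_i} T_a^v \psi_i(x) = \frac{c_{q,v}}{\lambda_i}\int_0^a \psi_i(t)\, j_v(xt,q^2)\, t^{2v+1} d_q t,
$$
and the plan is to show that the right-hand side, viewed as a function of $x$, makes sense and defines an entire function when $x$ is replaced by $z\in\C$.

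First I would write out the Jackson $q$-integral explicitly as
$$
\psi_i(z) = \frac{c_{q,v}(1-q)a}{\lambda_i}\sum_{n=0}^{\infty} q^n (aq^n)^{2v+1}\,\psi_i(aq^n)\, j_v(z a q^n, q^2).
$$
Each summand is entire in $z$ because $j_v(\,\cdot\,, q^2)$ is entire (as noted right after its definition). Thus it suffices to prove that the series converges uniformly on every compact subset of $\C$; then Weierstrass' theorem will deliver the desired analytic extension.

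The main step is therefore the uniform estimate. Fix $R>0$ and restrict to $|z|\le R$. Since $j_v(\,\cdot\,,q^2)$ is entire, the quantity $M(R):=\sup_{|w|\le aR}|j_v(w,q^2)|$ is finite, and because $|zaq^n|\le aR$ for all $n\ge 0$, we get $|j_v(zaq^n,q^2)|\le M(R)$ uniformly in $n$ and in $|z|\le R$. Applying Cauchy--Schwarz to the remaining factor yields
$$
\sum_{n=0}^{\infty}q^n(aq^n)^{2v+1}|\psi_i(aq^n)| \le \Bigl(\sum_{n=0}^{\infty}q^n(aq^n)^{2v+1}\Bigr)^{1/2}\Bigl(\sum_{n=0}^{\infty}q^n(aq^n)^{2v+1}|\psi_i(aq^n)|^2\Bigr)^{1/2}.
$$
The first factor is a convergent geometric-type series because $v>-1$ forces $q^{2v+2}<1$, and the second factor is, up to the constant $(1-q)a$, exactly $\|\psi_i\|_{q,2,v}^2$ on $[0,a]_q$, which is finite since $\psi_i\in\La$. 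Combining these bounds gives a $z$-independent majorant on $|z|\le R$, proving absolute and uniform convergence on the closed disc.

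The only real obstacle is making sure one has a $z$-uniform bound for $j_v$ that is compatible with the $L^2$ control on $\psi_i$; once one observes that restricting to a compact disc reduces matters to the trivial entire bound $M(R)$, the Cauchy--Schwarz argument closes the proof. Note that Proposition \ref{p2} is not needed here because compactness of $[0,a]_q$ makes only finitely many (in the sense of uniformly bounded $n\ge 0$) scales relevant; the delicate negative-$n$ bound of Proposition \ref{p2} would only matter if the integration extended to $\infty$.
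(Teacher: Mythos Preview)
Your argument is correct and follows exactly the same route as the paper: both use the eigenfunction relation $\psi_i=\lambda_i^{-1}T_a^v\psi_i$ (valid since $\lambda_i\neq 0$) together with the fact that $j_v(\cdot,q^2)$ is entire. The paper states this in one line without justifying the convergence, whereas you have carefully supplied the uniform-on-compacta estimate via the bound $M(R)$ and Cauchy--Schwarz; this extra detail is correct and simply makes explicit what the paper leaves to the reader.
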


\begin{proof}The result follows from the relation
$$
\psi_i(z)=\frac{1}{\lambda_i}c_{q,v}\int_0^a
\psi_i(t)j_v(zt,q^2)t^{2v+1}d_qt,
$$
and the fact that  $j_v(.,q^2)$ is an entire function.
\end{proof}

\begin{proposition}\label{p4}
The function $\psi_i$ belonging to the Paley-Wiener space $\P$
\end{proposition}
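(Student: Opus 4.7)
The plan is to read the eigenvalue equation $T_a^v\psi_i=\lambda_i\psi_i$ as an explicit representation of $\psi_i$ in the form required by the definition of $\P$. Since Proposition \ref{p3} guarantees $\lambda_i\neq 0$, we may divide by $\lambda_i$ to obtain
$$
\psi_i(x)=\int_0^a u_i(t)\,j_v(xt,q^2)\,t^{2v+1}\,d_qt,\qquad u_i(t):=\frac{c_{q,v}}{\lambda_i}\psi_i(t),
$$
valid for every $x\in\R$ (and in fact on $\C$, by Lemma \ref{l1}, though that is not needed for membership in $\P$).

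Next I would verify that $u_i$ lies in the space $\la$. By construction $\psi_i\in\La$, since it is an eigenfunction of the operator $T_a^v$ which acts on the Hilbert space $\La$; multiplying by the nonzero constant $c_{q,v}/\lambda_i$ preserves this. The natural embedding of $\La$ into $\l2$ that was set up in the preliminaries then places $u_i$ in $\la$, which is exactly the class of admissible densities appearing in the definition of $\P$.

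Combining the two observations, $\psi_i$ has the integral representation
$$
\psi_i(x)=\int_0^a u_i(t)\,j_v(xt,q^2)\,t^{2v+1}\,d_qt
$$
with $u_i\in\la$, so $\psi_i\in\P$ by definition.

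There is really no substantive obstacle: the proof is a direct rewriting of the eigenvalue equation. The only things one must be careful about are (i) invoking Proposition \ref{p3} to rule out $\lambda_i=0$, so that the division by $\lambda_i$ is legitimate, and (ii) using the natural embedding $\La\hookrightarrow\l2$ so that $u_i$ qualifies as an element of $\la$ rather than merely of $\La$.
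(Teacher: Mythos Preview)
Your proof is correct and is essentially the same as the paper's. The paper packages the density as $\phi_i=\frac{1}{\lambda_i}\psi_i\chi_{[0,a]}$ and observes $\F\phi_i=\psi_i$, which---after absorbing the constant $c_{q,v}$---is precisely your $u_i=\frac{c_{q,v}}{\lambda_i}\psi_i$; your explicit invocations of Proposition~\ref{p3} (to ensure $\lambda_i\neq 0$) and of the embedding $\La\hookrightarrow\la$ make the argument slightly more careful than the paper's presentation.
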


\begin{proof}Let
$$
\phi_i(x)=\frac{1}{\lambda_i}\psi_i(x)\chi_{[0,a]}(x),
$$
then
$$\a
\F\phi_i(x)&=c_{q,v}\int_0^\I \phi_i(t)j_v(xt,q^2)t^{2v+1}d_qt\\
&=\frac{c_{q,v}}{\lambda_i}\int_0^a \psi_i(t)j_v(xt,q^2)t^{2v+1}d_qt=\psi_i(x),\\
\ea$$ which implies that $\psi_i\in\P$.
\end{proof}

\bigskip
In the following we assume that
$$
\|\psi_i\|^2_{q,2,v}=\langle\psi_i,\psi_i\rangle=1.
$$

\begin{proposition}\label{p5}
The sequence $\left\{\psi_i\right\}_{i\in\N}$ forme an orthonormal
basis of $\P$.
\end{proposition}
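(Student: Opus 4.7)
The plan is to transport the known orthogonal-basis property of the $\psi_i$ in $\La$ over to $\P$ by exhibiting a natural unitary isomorphism between these two spaces that intertwines the two incarnations of $\psi_i$.

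Concretely, I introduce the map $\mathcal{F}_a : \La \to \l2$ defined by $\mathcal{F}_a u = \F(u\,\chi_{[0,a]})$, where $u$ is extended by zero outside $[0,a]_q$. Three properties of $\mathcal{F}_a$ do all the work: it takes values in $\P$ and is surjective onto $\P$, both directly from the definition of the $q$-Paley--Wiener space; and it is an isometry, being the restriction of the unitary operator $\F$ (Theorem \ref{t1}, part 3) to the closed subspace of $\l2$ consisting of functions supported on $[0,a]_q$. Hence $\mathcal{F}_a$ is a unitary isomorphism from $\La$ onto $\P$.

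The key identity, already recorded inside the proof of Proposition \ref{p4}, is $\mathcal{F}_a \psi_i = T_a^v \psi_i = \lambda_i \psi_i$, where on the left $\psi_i$ is the eigenfunction living in $\La$ and on the right it is the analytic extension from Lemma \ref{l1}, which sits in $\P$. Since $\{\psi_i\}_{i\in\N}$ is an orthogonal basis of $\La$ (spectral theorem applied to the compact self-adjoint $T_a^v$, combined with Proposition \ref{p3} to exclude the zero eigenvalue), its image $\{\lambda_i \psi_i\}$ under the unitary $\mathcal{F}_a$ is an orthogonal basis of $\P$, and because the $\lambda_i$ are nonzero scalars this is equivalent to $\{\psi_i\}$ being an orthogonal basis of $\P$.

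Finally, orthonormality is a short consequence of Plancherel and the standing hypothesis $\|\psi_i\|_{q,2,v}=1$: the isometry of $\mathcal{F}_a$ applied to the relation $\mathcal{F}_a\psi_i=\lambda_i\psi_i$ forces $\|\psi_i|_{[0,a]_q}\|_{q,2,v}=|\lambda_i|$, and then $\langle \psi_i,\psi_j\rangle=(\lambda_i\lambda_j)^{-1}\langle \psi_i|_{[0,a]_q},\psi_j|_{[0,a]_q}\rangle=\delta_{ij}$. The only real difficulty is notational rather than mathematical: the symbol $\psi_i$ does double duty as an eigenfunction on $[0,a]_q$ and as its extension to $\P\subset\l2$, and the whole content of the argument lies in keeping these straight and observing that $\mathcal{F}_a$ links them by the factor $\lambda_i$.
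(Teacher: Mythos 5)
Your proposal is correct and follows essentially the same route as the paper: the paper's proof consists precisely of observing that $\F$ restricted to $\la$ is a unitary isomorphism onto $\P$ carrying the orthonormal basis $\{\phi_i\}$ (i.e.\ $\lambda_i^{-1}\psi_i\chi_{[0,a]}$, which is your identity $\mathcal{F}_a\psi_i=\lambda_i\psi_i$ in disguise) to $\{\psi_i\}$. You merely spell out the isometry, surjectivity, and the orthonormality computation (which the paper defers to Lemma \ref{l2}) in more detail, and your care in distinguishing $\psi_i$ on $[0,a]_q$ from its extension is a genuine improvement in clarity, not a change of method.
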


\begin{proof}
The $q$-Bessel Fourier transform
$$
\F:\la\rightarrow \P,
$$
define an isomorphism, and the sequence $\{\phi_i\}_{i\in\N}$ form
an orthogonal basis of the Hilbert space $\la$, which lead to the
result.
\end{proof}

\begin{proposition}\label{p6}
Let
$$
k_x:y\mapsto k(x,y),
$$
then
$$
f\in\P\Leftrightarrow f(x)=\langle f,k_x\rangle,\quad\forall x\in\R.
$$
\end{proposition}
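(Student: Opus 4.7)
The plan is to recognise $k_x$ as the $q$-Bessel Fourier transform of a truncated $q$-Bessel function, so that the reproducing property reduces to Plancherel and the involutivity $\F^2=\mathrm{Id}$ recorded in Theorem~\ref{t1}. The first step I would carry out is to rewrite the kernel from Remark~\ref{r1}, read as a function of its second argument, in the form
$$k_x(y)=\F(h_x)(y),\qquad h_x(t)=c_{q,v}\,j_v(xt,q^2)\,\chi_{[0,a]}(t).$$
Since $h_x$ has $q$-support in $[0,a]_q$ and $j_v$ is bounded by Proposition~\ref{p2}, we have $h_x\in\l2$; by the isometry in item~3 of Theorem~\ref{t1} we also get $k_x\in\l2$, so $\langle f,k_x\rangle$ is well defined for every $f\in\l2$ (in particular for $f\in\P$).

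Next, I would apply the self-adjointness of $\F$ (item~2 of Theorem~\ref{t1}) to obtain the key identity
$$\langle f,k_x\rangle=\langle \F f,h_x\rangle=c_{q,v}\int_0^a \F f(t)\,j_v(xt,q^2)\,t^{2v+1}d_qt. \qquad (\ast)$$
Both implications of the proposition then fall out of $(\ast)$ with little additional work.

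For the direction $(\Rightarrow)$, given $f\in\P$ I would write $f=c_{q,v}^{-1}\F \tilde u$ where $\tilde u\in\l2$ is the extension by zero of some $u\in\la$, then invoke $\F^2=\mathrm{Id}$ to get $\F f=c_{q,v}^{-1}\tilde u$, and substitute into $(\ast)$; the factors of $c_{q,v}$ cancel and the right-hand side becomes precisely the defining integral for $f(x)$. For $(\Leftarrow)$, assuming $f\in\l2$ with $f(x)=\langle f,k_x\rangle$ for every $x\in\R$, the identity $(\ast)$ already exhibits $f$ in the form demanded by the definition of $\P$, with the integrand $u:=c_{q,v}\,\F f\,\chi_{[0,a]}$ belonging to $\la$ because $\F f\in\l2$ by Plancherel.

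The only point requiring care is the implicit hypothesis $f\in\l2$ in the $(\Leftarrow)$ direction, needed to make sense of the pairing; no substantial obstacle arises, as the argument is just the standard reproducing-kernel bookkeeping applied to the $q$-Bessel transform $\F$.
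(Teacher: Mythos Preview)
Your argument is correct and reaches the same destination as the paper, but by a different route. The paper does not write $k_x=\F(h_x)$; instead it introduces $\sigma_a=\F(\chi_{[0,a]})$, identifies $k_x=c_{q,v}\,T^v_{q,x}\sigma_a$ via the $q$-Bessel translation operator, and then uses the convolution theorem (item~4 of Theorem~\ref{t1}) to rewrite the condition $\F f=\F f\cdot\chi_{[0,a]}$ as $f=f*_q\sigma_a=\langle f,k_x\rangle$. Your approach bypasses translation and convolution entirely, relying only on the self-adjointness and involutivity of $\F$ (items~1 and~2 of Theorem~\ref{t1}) to obtain the identity $(\ast)$ directly. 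This is lighter in machinery and in particular does not appeal to the positivity hypothesis $q\in Q_v$ that underlies the convolution calculus. The paper's version, on the other hand, makes the link $k_x=c_{q,v}\,T^v_{q,x}\sigma_a$ explicit, which situates the reproducing kernel within the convolution structure and is conceptually useful elsewhere (e.g.\ in Proposition~\ref{p9}). Both proofs ultimately hinge on the same integral identity $(\ast)$, so the distinction is one of packaging rather than substance.
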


\begin{proof}Let
$$
\sigma_a(y)=\F\left(\chi_{[0,a]}\right)(x)=c_{q,v}\int_0^a
j_v(ty,q^2)t^{2v+1}d_qt,
$$
therefore
$$
T^v_{q,x}\sigma_a(y)=c_{q,v}\int_0^a
j_v(tx,q^2)j_v(ty,q^2)t^{2v+1}d_qt=\frac{1}{c_{q,v}}k(x,y),
$$
and then
$$\a
f\in\P&\Leftrightarrow \F f(x)=\F f(x) \chi_{[0,a]}(x)=\F f(x)\F \sigma_a(x)\\
&\Leftrightarrow f(x)=f*_q\sigma_a(x)=c_{q,v}\langle
f,T^v_{q,x}\sigma_a\rangle=\langle f,k_x\rangle. \ea$$ This finish
the proof
\end{proof}

\begin{corollary}\label{c1}
We have
$$
k(x,y)=\sum_{i=0}^\I\psi_i(x)\psi_i(y),\quad\forall x,y\in\R.
$$
\end{corollary}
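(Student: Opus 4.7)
The plan is to recognize the desired sum as the expansion of the reproducing kernel $k_x$ in the orthonormal basis $\{\psi_i\}_{i\in\N}$ of $\P$, and then upgrade Hilbert-space convergence to pointwise equality using the reproducing property itself.

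First I would verify that $k_x\in\P$ for each fixed $x\in\R$. Reading the explicit integral from Remark \ref{r1} as a function of $y$ with $x$ fixed gives
$$k_x(y)=\int_0^a \left[c_{q,v}^2\, j_v(xt,q^2)\right] j_v(yt,q^2)\, t^{2v+1}\, d_qt,$$
which is exactly of the form defining $\P$, with the coefficient $t\mapsto c_{q,v}^2 j_v(xt,q^2)$ belonging to $\la$ since $j_v(\cdot,q^2)$ is entire and $[0,a]_q$ is bounded. Thus $k_x\in\P$.

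Next, because $\{\psi_i\}_{i\in\N}$ is an orthonormal basis of $\P$ by Proposition \ref{p5}, I would expand
$$k_x=\sum_{i=0}^{\I}\langle k_x,\psi_i\rangle\,\psi_i$$
with convergence in the $\l2$-norm. The coefficients are computed using Propositions \ref{p4} and \ref{p6}: since $\psi_i\in\P$, the reproducing identity of Proposition \ref{p6} gives $\psi_i(x)=\langle\psi_i,k_x\rangle=\langle k_x,\psi_i\rangle$. Hence $k_x=\sum_{i=0}^{\I}\psi_i(x)\psi_i$ in $\l2$-norm.

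Finally, to promote this identity to pointwise equality at an arbitrary $y\in\R$, I would apply Proposition \ref{p6} once more to the partial sums $S_N=\sum_{i=0}^{N}\psi_i(x)\psi_i$, which lie in $\P$ as finite linear combinations of basis elements: this gives $S_N(y)=\langle S_N,k_y\rangle$, and Cauchy--Schwarz combined with the norm convergence $S_N\to k_x$ in $\l2$ yields $S_N(y)\to\langle k_x,k_y\rangle=k(x,y)$, which is the claimed identity. The only real subtlety is this passage from Hilbert-space to pointwise convergence; it is cleanly handled by the reproducing kernel structure already established in Proposition \ref{p6}, so no further analytic machinery is needed.
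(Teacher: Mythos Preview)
Your proof is correct and follows essentially the same approach as the paper: show $k_x\in\P$, expand it in the orthonormal basis $\{\psi_i\}$ of $\P$, and identify the coefficients $\langle k_x,\psi_i\rangle=\psi_i(x)$ via the reproducing property of Proposition~\ref{p6}. You are in fact more careful than the paper, which writes the basis expansion pointwise without addressing the passage from $\l2$-norm convergence to pointwise equality that you handle explicitly via Cauchy--Schwarz and the reproducing kernel.
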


\begin{proof}In fact $k_x\in\P$. Then
$$
k_x(y)=\sum_{i=0}^\I\langle k_x,\psi_i\rangle\psi_i(y).
$$
On the other hand
$$
\psi_i\in\P\Leftrightarrow \langle \psi_i,k_x\rangle=\psi_i(x),
$$
which prove the result.
\end{proof}

\begin{lemma}\label{l2}
For $i,j\in\N$
$$
\int_0^a\psi_i(x)\psi_j(x)x^{2v+1}d_qx=\lambda_i\lambda_j\delta_{ij}.
$$
\end{lemma}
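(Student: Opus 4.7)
The plan is to reduce the claim to the orthonormality $\langle\psi_i,\psi_j\rangle=\delta_{ij}$ on all of $\mathbb{R}_q^+$ via Plancherel. The key observation is the identity
$$
\psi_i(x)\,\chi_{[0,a]}(x)=\lambda_i\,\phi_i(x),
$$
which is exactly how $\phi_i$ was defined in the proof of Proposition \ref{p4}. Thus the integral to be computed is essentially $\lambda_i\lambda_j$ times $\langle\phi_i,\phi_j\rangle$:
$$
\int_0^a\psi_i(x)\psi_j(x)x^{2v+1}d_qx=\lambda_i\lambda_j\int_0^\infty\phi_i(x)\phi_j(x)x^{2v+1}d_qx=\lambda_i\lambda_j\langle\phi_i,\phi_j\rangle.
$$

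Next, I would use parts 1--3 of Theorem \ref{t1} to transfer inner products under $\mathcal{F}_{q,v}$. Since $\mathcal{F}_{q,v}\phi_i=\psi_i$ (shown in the proof of Proposition \ref{p4}), and since parts 1 and 2 yield the polarized Parseval identity $\langle\mathcal{F}_{q,v}f,\mathcal{F}_{q,v}g\rangle=\langle f,g\rangle$ (set $g=\mathcal{F}_{q,v}h$ in part 2 and apply part 1), we obtain
$$
\langle\phi_i,\phi_j\rangle=\langle\mathcal{F}_{q,v}\phi_i,\mathcal{F}_{q,v}\phi_j\rangle=\langle\psi_i,\psi_j\rangle.
$$

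Finally, by Proposition \ref{p5} together with the normalization $\|\psi_i\|_{q,2,v}^2=1$ assumed just before it, $\{\psi_i\}_{i\in\mathbb{N}}$ is orthonormal in $PW_{q,a}^v\subset\mathcal{L}_{q,2,v}$, so $\langle\psi_i,\psi_j\rangle=\delta_{ij}$. Combining the two displays gives the claim. There is no real obstacle here; the only subtle point is recognizing that the $[0,a]$-orthogonality of the eigenfunctions of the symmetric compact operator $T_a^v$, which gives the off-diagonal vanishing, must be upgraded to a quantitative diagonal statement, and this quantitative piece is exactly what Plancherel combined with the normalization of $\psi_i$ on $(0,\infty)$ provides.
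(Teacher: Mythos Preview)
Your proof is correct and follows essentially the same route as the paper: rewrite the truncated integral as $\lambda_i\lambda_j\langle\phi_i,\phi_j\rangle$, apply the Plancherel identity to get $\langle\phi_i,\phi_j\rangle=\langle\psi_i,\psi_j\rangle$, and then use orthonormality. The only cosmetic difference is that the paper establishes $\langle\phi_i,\phi_j\rangle=\delta_{ij}$ directly from the orthogonality of the eigenfunctions of $T_a^v$ on $[0,a]_q$ together with $\|\phi_i\|_{q,2,v}=\|\psi_i\|_{q,2,v}=1$, whereas you invoke Proposition~\ref{p5}; these are the same argument packaged differently.
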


\begin{proof}In fact
$$
\langle \phi_i,\phi_j\rangle=\langle\F\phi_i,\F\phi_j\rangle=\langle
\psi_i,\psi_j\rangle,
$$
and
$$
\langle
\phi_i,\phi_j\rangle=\frac{1}{\lambda_i\lambda_j}\int_0^a\psi_i(x)\psi_j(x)x^{2v+1}d_qx.
$$
On the other hand, if $i\neq j$ then
$$
\langle
\phi_i,\phi_j\rangle=\int_0^a\phi_i(t)\phi_j(t)t^{2v+1}d_qt=0.
$$
Moreover, $\|\phi_i\|_{q,2,v}=\|\psi_i\|_{q,2,v}=1$ which prove that
$\langle\phi_i,\phi_j\rangle=\delta_{ij}$. This leads to the result.
\end{proof}

\bigskip
In order to be more precise about what it means for the energy of a
$q$-bandlimited single  $f\in\P$ to be mainly concentrated on the
interval $[0,a]_q$, we consider the concentration index:
$$
\theta_a^vf=\frac{\int_0^af(x)^2x^{2v+1}d_qx}{\|f\|^2_{q,v,2}},
$$
whose values range from $0$ to $1$.

\begin{proposition}\label{p7}
The maximum value of $\theta_a^vf $ is attained for $f=\psi_0$ and
$$
\theta_a^vf=\frac{\sum_{i=0}^n \lambda_i^2\langle
f,\psi_i\rangle^2}{\sum_{i=0}^n \langle f,\psi_i\rangle^2}\geq
\lambda_n^2,\quad\text{if}\quad
f\in\text{span}\{\psi_0,\ldots,\psi_n\},
$$

$$
\theta_a^vf=\frac{\sum_{i=n+1}^\I \lambda_i^2\langle
f,\psi_i\rangle^2}{\sum_{i=n+1}^\I \langle f,\psi_i\rangle^2}\leq
\lambda_{n+1}^2,\quad\text{if}\quad
f\in\text{span}\{\psi_0,\ldots,\psi_n\}^\perp.
$$
\end{proposition}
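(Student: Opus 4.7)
The plan is to expand $f\in\P$ in the orthonormal basis $\{\psi_i\}_{i\in\N}$ supplied by Proposition \ref{p5}, and reduce both the numerator and denominator of $\theta_a^v f$ to explicit series in the expansion coefficients $c_i=\langle f,\psi_i\rangle$. Once this is done, all three claims in the proposition will be read off from the same single identity.

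First I would invoke Parseval in $\l2$ to write $\|f\|^2_{q,v,2}=\sum_{i=0}^\I c_i^2$. For the numerator $\int_0^a f(x)^2 x^{2v+1}d_qx$, I would substitute the basis expansion of $f$, interchange the sum with the $q$-integral over $[0,a]_q$, and collapse the cross terms via Lemma \ref{l2}, which gives $\int_0^a \psi_i(x)\psi_j(x)x^{2v+1}d_qx=\lambda_i\lambda_j\delta_{ij}$. This produces the single identity
$$
\theta_a^v f=\frac{\sum_{i=0}^\I \lambda_i^2 c_i^2}{\sum_{i=0}^\I c_i^2}.
$$
Both stated formulas are now immediate: if $f\in\text{span}\{\psi_0,\ldots,\psi_n\}$, the coefficients $c_i$ vanish for $i>n$, so both sums truncate to $i\leq n$; if $f\in\text{span}\{\psi_0,\ldots,\psi_n\}^\perp$, they vanish for $i\leq n$, so both sums begin at $i=n+1$. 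The two inequalities then follow at once from the monotonicity $\lambda_0^2\geq\lambda_1^2\geq\cdots$ of Proposition \ref{p3}, comparing each $\lambda_i^2$ in the relevant range to $\lambda_n^2$ or $\lambda_{n+1}^2$. The global maximum claim $\theta_a^v f\leq\lambda_0^2$, attained at $f=\psi_0$, drops out of the same identity since every $\lambda_i^2\leq\lambda_0^2$, while Lemma \ref{l2} gives $\theta_a^v\psi_0=\lambda_0^2$.

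There is no conceptual obstacle here; the only technicality is the term-by-term $q$-integration in the numerator. Since $\{\psi_i\}$ is an orthonormal basis of the closed subspace $\P\subset\l2$ and the multiplication operator $u\mapsto\chi_{[0,a]}u$ is bounded on $\l2$, the truncations $f_N=\sum_{i=0}^N c_i\psi_i$ converge to $f$ in $\|\cdot\|_{q,v,2}$ and the corresponding truncated numerators converge to $\int_0^a f(x)^2 x^{2v+1}d_qx$, which legitimates the exchange of sum and $q$-integral.
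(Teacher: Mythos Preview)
Your proof is correct and follows essentially the same approach as the paper: both derive the key identity $\theta_a^v f=\dfrac{\sum_i \lambda_i^2\langle f,\psi_i\rangle^2}{\sum_i \langle f,\psi_i\rangle^2}$ and then read off all three claims from the monotonicity $\lambda_0^2\geq\lambda_1^2\geq\cdots$ of Proposition~\ref{p3}. The only cosmetic difference is in the numerator computation: the paper applies Parseval in $\La$ with the orthonormal basis $\{\phi_i\}$ and then converts the coefficients via the isometry $\F$, whereas you expand $f$ in $\{\psi_i\}$ and invoke Lemma~\ref{l2} directly---these are equivalent, since Lemma~\ref{l2} itself is proved precisely by that Parseval-plus-$\F$ route.
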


\begin{proof}With the Parseval equality
$$
\int_0^a f(x)^2x^{2v+1}d_qx=\sum_{i=0}^\I \langle f,\phi_i\rangle^2,
$$
and the fact that
$$\a
\sum_{i=0}^\I \langle f,\phi_i\rangle^2
&=\sum_{i=0}^\I \langle\F f,\psi_i\rangle^2\\
&=\sum_{i=0}^\I \lambda_i^2\langle\F f,\phi_i\rangle^2
=\sum_{i=0}^\I \lambda_i^2\langle f,\psi_i\rangle^2, \ea$$

$$
\|f\|_{q,v,2}^2=\sum_{i=0}^\I\langle f, \psi_i\rangle^2,
$$
We get
$$
\theta_a^vf=\frac{\sum_{i=0}^\I \lambda_i^2\langle
f,\psi_i\rangle^2}{\sum_{i=0}^\I \langle
f,\psi_i\rangle^2}\leq\lambda_0^2=\theta_a^v\psi_0,
$$
which leads to the result.
\end{proof}

\begin{remark}\label{r2}
If $b>a$ then
$$
\P\subset\Pb,
$$
Now let $\{\mu_n\}_{n\in\Z}$ the sequence of eigenvalues of the
operator $T_b^v$ then we have
$$
\lambda_0^2=\theta_a^v\psi_0\leq\theta_b^v\psi_0\leq\mu_0^2.
$$
\end{remark}

\begin{proposition}\label{p8}
The $q$-Paley-Wiener space $\P$ is a closed subspace of $\l2$.
\end{proposition}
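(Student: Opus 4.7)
The plan is to identify $\P$ as the image of the manifestly closed subspace $\la$ under the isometry $\F$, and then invoke Theorem \ref{t1} to conclude. First, I would check that $\la$ is closed in $\l2$: under the natural embedding, it consists of those $u\in\l2$ supported in $[0,a]_q$, i.e. vanishing at every $q^s\notin[0,a]_q$. Since $\l2$ is a weighted $\ell^2$-space over the discrete set $\RR$, evaluation at any single point of $\RR$ is a continuous linear functional, and $\la$ is the intersection of the kernels of countably many such functionals. Therefore $\la$ is closed.

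Next I would rewrite the defining integral of $\P$ in terms of $\F$. For $u\in\la$, extended by zero outside $[0,a]_q$,
$$
\F u(x)=c_{q,v}\int_0^\I u(t)j_v(xt,q^2)t^{2v+1}d_qt=c_{q,v}\int_0^a u(t)j_v(xt,q^2)t^{2v+1}d_qt,
$$
so every $f\in\P$ has the form $f=c_{q,v}^{-1}\F u$ with $u\in\la$. Since $\la$ is a linear subspace (invariant under the nonzero scalar $c_{q,v}^{-1}$), this identifies $\P$ with the image $\F(\la)$ as subspaces of $\l2$.

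By parts 1 and 3 of Theorem \ref{t1}, $\F$ is an involution ($\F^2=I$) and an isometry on $\l2$, hence a bijective isometry of this Hilbert space. A bijective isometry sends closed subspaces to closed subspaces, so $\F(\la)=\P$ is closed in $\l2$, as required.

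The only point that requires any real care is the bookkeeping with the constant $c_{q,v}$ in matching the definitions of $\P$ and $\F$: one must observe that $\P$ is defined as a linear subspace rather than as a specific set of representatives, so the stray scalar factor is absorbed into $\la$. Beyond that there is no substantial obstacle; the result is essentially a corollary of the Plancherel-type statement in Theorem \ref{t1}.
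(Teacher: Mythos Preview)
Your argument is correct. Both your proof and the paper's rest on the same core ingredient, namely that $\F$ is an isometry of $\l2$ (Theorem~\ref{t1}), together with the identification $\P=\F(\la)$; the paper, however, does not package the conclusion abstractly. Instead of first proving $\la$ closed and then invoking ``bijective isometries preserve closed subspaces,'' it runs the sequential argument by hand: given $f_n\to f$ with $f_n\in\P$, it applies the isometry to get $\F f_n\to\F f$, observes that each $\F f_n$ vanishes off $[0,a]_q$, and reads off from
$$
\int_0^a|\F f_n-\F f|^2x^{2v+1}d_qx+\int_a^\I|\F f|^2x^{2v+1}d_qx\to 0
$$
that $\F f$ is supported in $[0,a]_q$, hence $f\in\P$. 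Your route is cleaner and isolates the two logical steps (closedness of $\la$, then transport by an isometric isomorphism); the paper's version is more elementary in that it avoids any appeal to continuity of point evaluations or to general Hilbert-space facts, at the cost of being slightly more ad hoc. Substantively they are the same proof.
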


\begin{proof}First we show that $\P$ is a subspace
of $\l2$. In fact let $$f\in\P$$ then there exist $u\in\la$ such
that
$$
f(x)=c_{q,v}\int_0^a u(t)j_v(xt,q^2)t^{2v+1}d_qt=\F (u)(x).
$$
As $\la\subset\l2$ and from the Theorem \ref{t1} we show that $\F
(u)\in\l2$ which implies
$$
\P\subset\l2.
$$
Now, given $f\in\l2$ and let $\{f_n\}_{n\in\N}$ be a sequence of
element of $\P$ which converge to $f$ in $L^2$-norm. For $n\in\N$,
there exist $u_n\in\la$ such that
$$
f_n(x)=c_{q,v}\int_0^a u_n(t)j_v(xt,q^2)t^{2v+1}d_qt.
$$
Moreover
$$
\lim_{n\rightarrow\I}\|f_n-f\|_{q,2,v}=0,
$$
this give
$$
\lim_{n\rightarrow\I}\|\F f_n-\F f\|_{q,2,v}=0,
$$
and then
$$
\int_0^a|\F f_n(x)-\F f(x)|^2x^{2v+1}d_qx+\int_a^\I|\F
f(x)|^2x^{2v+1}d_qx\rightarrow 0,
$$
which implies $\F f(x)=0$ if $x\in\R$ and $x>a$ and then $f\in\P$.
\end{proof}

\begin{theorem}\label{t2}
For any function $f\in\P$ we have
\begin{equation}\label{e1}
f(z)=(1-q)\sum_{k\in\Z}q^{2k(v+1)}f(q^k)k_z(q^k),\quad\forall
z\in\C.
\end{equation}
\end{theorem}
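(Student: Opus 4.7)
The plan is to start from the reproducing kernel identity of Proposition \ref{p6}, which gives $f(x) = \langle f, k_x\rangle$ for every $x \in \R$, and unfold the inner product as a Jackson $q$-integral:
$$
f(x) = \int_0^\infty f(t)\, k_x(t)\, t^{2v+1}\, d_q t = (1-q)\sum_{k\in\Z} q^{2k(v+1)} f(q^k)\, k_x(q^k).
$$
This is exactly \eqref{e1} restricted to the sampling set $z \in \RR$, and the remaining task is to upgrade the identity from $\RR$ to all of $\C$.

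I would do this by analytic continuation: both sides will be shown to be entire in $z$, and since they agree on $\RR = \{q^n : n\in\Z\}$, which accumulates at $0 \in \C$, the identity theorem will force equality throughout $\C$. Analyticity of the left-hand side is routine: since $f \in \P$ has the form $f(z) = c_{q,v}\int_0^a u(t) j_v(zt, q^2) t^{2v+1}\, d_q t$ with $u \in \la$, the argument of Lemma \ref{l1} applied to $f$ in place of $\psi_i$ shows that $f$ extends to an entire function.

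The substantive step is to show that
$$
F(z) := (1-q)\sum_{k\in\Z} q^{2k(v+1)} f(q^k)\, k(z, q^k)
$$
defines an entire function. Each summand is entire in $z$ by the integral formula for $k(z, q^k)$ in Remark \ref{r1}, so I only need uniform convergence on each compact $K \subset \C$. The key observation is that $k_z = \F(g_z)$ where $g_z(t) = c_{q,v}\, j_v(zt, q^2)\,\chi_{[0,a]}(t)$; Plancherel (Theorem \ref{t1}.3) then yields $\|k_z\|_{q,v,2} = \|g_z\|_{q,v,2}$, and the latter is uniformly bounded for $z \in K$ because $j_v$ is entire and the integration interval $[0,a]$ is bounded. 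A Cauchy--Schwarz bound on the tails of the series, with $\|f\|_{q,v,2}$ on one side and the uniformly bounded $\|k_z\|_{q,v,2}$ on the other, then gives uniform convergence on $K$, so $F$ is entire, and the identity theorem concludes the proof.

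The main obstacle is this uniform-convergence step: without the Plancherel trick one would be forced into delicate pointwise bounds on $k(z, q^k)$ as $k \to -\infty$ (where $q^k$ blows up and one would have to invoke Proposition \ref{p2}), but recognizing $k_z$ itself as a $q$-Bessel Fourier transform bypasses these estimates entirely.
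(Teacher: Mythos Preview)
Your proof is correct and follows the same overall strategy as the paper: show that both $f$ and the right-hand side are entire, note they agree on $\{q^k : k\in\Z\}$, and invoke the identity theorem at the accumulation point $0$. The only difference is in how analyticity of the series is obtained. You establish uniform convergence on compacts via a Cauchy--Schwarz tail bound, using Plancherel to control $\|k_z\|_{q,v,2}$ uniformly on $K$. The paper instead pushes your observation $k_z=\F g_z$ one step further: by the self-adjointness of $\F$ (Theorem~\ref{t1}.2) one has
$$
\langle f,k_z\rangle=\langle \F f,\F k_z\rangle=c_{q,v}\int_0^a \F f(t)\,j_v(zt,q^2)\,t^{2v+1}\,d_qt,
$$
a finite $q$-integral that is visibly entire in $z$ with no tail estimate needed. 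Your route is slightly more hands-on but equally valid; the paper's shortcut simply trades the norm bound for the full Plancherel identity.
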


\begin{proof}In fact $f$ is an analytic function, and from
Proposition \ref{p6}
$$
f(x)=\langle f,k_x\rangle,\quad\forall x\in\R.
$$
We have
$$\aligned
\langle f,k_x\rangle&=\langle \F f,\F k_x\rangle=c_{q,v}\langle \F
f,j_v(x.,q^2)\chi_{[0,a]}\rangle\\
&=c_{q,v}\int_0^a\F f(t)j_v(xt,q^2)t^{2v+1}d_qt.
\endaligned$$
which prove that
$$
z\mapsto \langle f,k_z\rangle,
$$
is an analytic function. On the other hand
$$
\langle f,k_z\rangle=(1-q)\sum_{k\in\Z}q^{2k(v+1)}f(q^k)k_z(q^k),
$$
and
$$
\langle f,k_{q^k}\rangle=f(q^k),\quad\forall k\in\Z.
$$
As $\{0\}$ is an accumulation point of the following set
$$
\{q^k,\quad k\in\Z\},
$$
we conclude that $\langle f,k_z\rangle=f(z),\quad\forall z\in\C.$
\end{proof}

\begin{remark}\label{r3}
In many fields , telecommunication in particular, the
Whittaker-Shannon-Kotel'nikov sampling theorem plays a central role.
It is know that sampling is the process of converting a signal
(e.g., a function of continuous time or space) into a numeric
sequence (a function of discrete time or space). Namely this theorem
say that every function in the cosine Paley-Wiener space:
$$
PW^{-\frac{1}{2}}_a=\left\{f(x)=\sqrt{\frac{2}{\pi}}\int_0^{a}u(t)\cos(xt)dt,\quad
u\in  L^2[0,a]\right\},
$$
can be written as
$$f(x)=\sqrt{\frac{2}{\pi}}\sum_{n\in\Z}f\left(\frac{\pi}{a}n\right)\frac{\sin(ax-\pi
n)}{ax-\pi n}.
$$
Then the above theorem can be viewed as a sampling formula where the
sampling points are $q^n$ independent of $a$. By the use of
Proposition 1 we get
$$
k_z(q^n)=\frac{(1-q)c_{q,v}^2}{1-q^{2v+2}}a^{2v+2}\times
\frac{q^{2n}j_{v+1}(aq^n,q^2)j_v(aq^{-1}z,q^2)-z^2j_{v+1}(az,q^2)j_v(aq^{-1+n},q^2)}{q^{2n}-z^2}.
$$
\end{remark}

\begin{proposition}\label{p9}
Given a function $f\in\l2$ and let
$$
f_a(x)=\langle f,k_x\rangle,
$$
then
$$
f_a\in\P,
$$
and for all $\delta>0$ we have
$$
\lim_{a\rightarrow\I}\sup_{x>\delta,x\in\R}|f(x)-f_a(x)|=0.
$$
\end{proposition}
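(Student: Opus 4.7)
The strategy is first to identify $f_a$ explicitly as a truncated $q$-Bessel Fourier transform of $\F f$, which simultaneously places $f_a$ in $\P$ and gives a clean expression for $f-f_a$, and then to control this remainder uniformly on $\{x>\delta\}$ by a Cauchy--Schwarz argument combined with a scaling identity for the $q$-integral.

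First, I would recall from the computation inside the proof of Proposition~\ref{p6} that the reproducing kernel admits the representation $k_x=c_{q,v}\F(j_v(x\cdot,q^2)\chi_{[0,a]})$. Combined with the self-adjointness of $\F$ from Theorem~\ref{t1}, this yields
$$
f_a(x)=\langle f,k_x\rangle=c_{q,v}\int_0^a\F f(t)\,j_v(xt,q^2)\,t^{2v+1}d_qt.
$$
Since $\F f\in\l2$ by Plancherel (Theorem~\ref{t1}), its restriction to $[0,a]_q$ lies in $\la$, so this formula exhibits $f_a$ as an element of $\P$. Pairing this with the inversion formula $f=\F^2 f$ gives
$$
f(x)-f_a(x)=c_{q,v}\int_a^\infty\F f(t)\,j_v(xt,q^2)\,t^{2v+1}d_qt.
$$

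Next, applying Cauchy--Schwarz to this $q$-integral produces
$$
|f(x)-f_a(x)|^2\leq c_{q,v}^2\left[\int_a^\infty|\F f(t)|^2t^{2v+1}d_qt\right]\left[\int_a^\infty|j_v(xt,q^2)|^2t^{2v+1}d_qt\right].
$$
The first factor tends to $0$ as $a\to\infty$ because $\F f\in\l2$. For the second, I would perform the discrete change of variable $u=xt$ (valid since $a,x\in\RR$) to obtain
$$
\int_a^\infty|j_v(xt,q^2)|^2t^{2v+1}d_qt=x^{-(2v+2)}\int_{ax}^\infty|j_v(u,q^2)|^2u^{2v+1}d_qu.
$$
The bounds of Proposition~\ref{p2} show that $\int_0^\infty|j_v(u,q^2)|^2u^{2v+1}d_qu<\infty$ (the $n\geq 0$ terms sum geometrically in $q^{n(2v+2)}$, while the $n<0$ terms are killed by a $q^{2n^2}$-type factor), so for $x>\delta$ the second factor is dominated by the constant $C_\delta:=\delta^{-(2v+2)}\int_0^\infty|j_v(u,q^2)|^2u^{2v+1}d_qu$.

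The main obstacle is precisely this uniform Bessel tail bound; the decisive point is that the scaling identity isolates the dangerous $x$-dependence as the factor $x^{-(2v+2)}$, which for $v>-1$ is uniformly bounded on $\{x\geq\delta\}$. Combining the two estimates then yields
$$
\sup_{x>\delta,\,x\in\R}|f(x)-f_a(x)|^2\leq c_{q,v}^2\,C_\delta\int_a^\infty|\F f(t)|^2t^{2v+1}d_qt,
$$
and the right-hand side tends to $0$ as $a\to\infty$, giving the claim.
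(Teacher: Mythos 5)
Your proposal is correct and follows essentially the same route as the paper: identify $f_a$ as the truncated transform $c_{q,v}\int_0^a\F f(t)j_v(xt,q^2)t^{2v+1}d_qt$ via self-adjointness of $\F$, write the error as the tail integral, and bound it by Cauchy--Schwarz plus the scaling that extracts the factor $x^{-(2v+2)}\leq\delta^{-(2v+2)}$ together with $\|j_v(\cdot,q^2)\|_{q,2,v}<\infty$. Your explicit justification of that last finiteness via Proposition~\ref{p2} is a small addition the paper leaves implicit, but the argument is the same.
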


\begin{proof}First
$$
|f_a(x)|\leq\|f\|_{q,v,2}\|k_x\|_{q,v,2}<\I.
$$
Now we can write
$$\aligned
f_a(x)&=\langle f,k_x\rangle=\langle \F f,\F
k_x\rangle=c_{q,v}\langle \F
f,j_v(x.,q^2)\chi_{[0,a]}\rangle\\
&=c_{q,v}\int_0^a\F f(t)j_v(xt,q^2)t^{2v+1}d_qt.
\endaligned$$
which prove that $f_a\in\P$. On the other hand
$$
f(x)= c_{q,v}\langle \F f,j_v(x.,q^2)\rangle,
$$
and therefore
$$\a
|f(x)-f_a(x)|^2&=c_{q,v}^2\left|\int_a^\I \F
f(t)j_v(xt,q^2)t^{2v+1}d_qt\right|^2\\
&\leq c_{q,v}^2\left(\int_a^\I |\F f(t)||j_v(xt,q^2)|t^{2v+1}d_qt\right)^2\\
&\leq c_{q,v}^2\int_a^\I |\F f(t)|^2t^{2v+1}d_qt\int_a^\I
|j_v(xt,q^2)|^2t^{2v+1}d_qt\\
&\leq \frac{c_{q,v}^2}{x^{2v+2}}\int_a^\I |\F
f(t)|^2t^{2v+1}d_qt\int_{ax}^\I |j_v(t,q^2)|^2t^{2v+1}d_qt\\
&\leq \frac{c_{q,v}^2\|j_v(.,q^2)\|^2_{q,v,2}}{x^{2v+2}}\int_a^\I
|\F f(t)|^2t^{2v+1}d_qt. \ea$$ Using the fact that
$$
\int_0^\I |\F f(t)|^2t^{2v+1}d_qt=\|\F f\|_{q,v,2}^2=\|
f\|_{q,v,2}^2<\I,
$$
we finish the proof.
\end{proof}

\section{Application}
In this section we tack $v=-1/2$ and $q=0.5$ and we put
$$
f(x)=\frac{1}{1+x^2},
$$
an even function belong to the space $\l2$. Using the sampling
formula (\ref{e1}) for the function $f_a(x)=\langle f,k_x\rangle$
respectively for $a=1,$ $a=1/q$ and $a=1/q^2$ with sampling point
$$q^n,\quad n=-1\ldots 10$$ we obtain

\bigskip
\begin{pspicture}(10,4)
\scalebox{0.5}{

\put(-1,0){\epsfbox{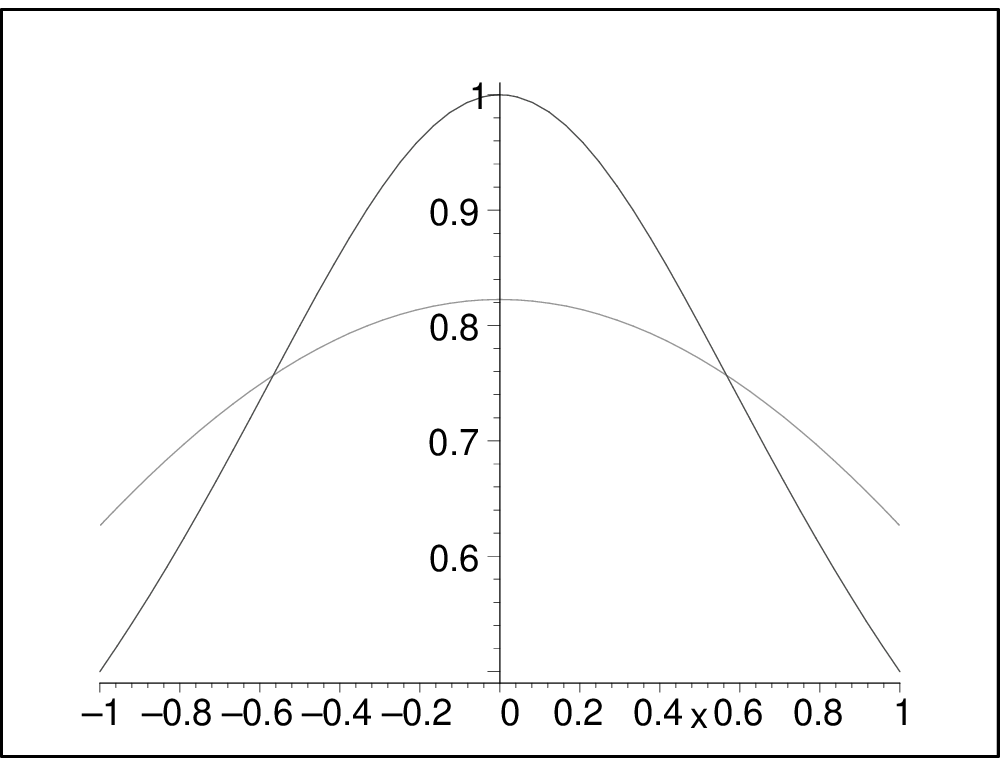}}

\put(9,0){\epsfbox{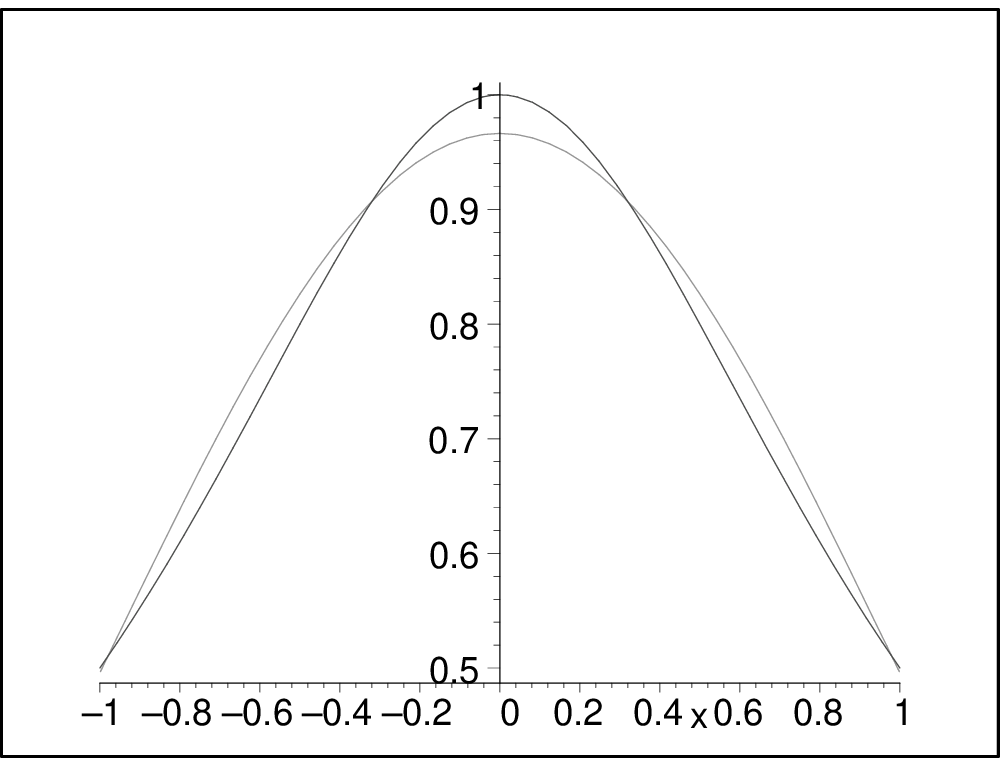}}

\put(19,0){\epsfbox{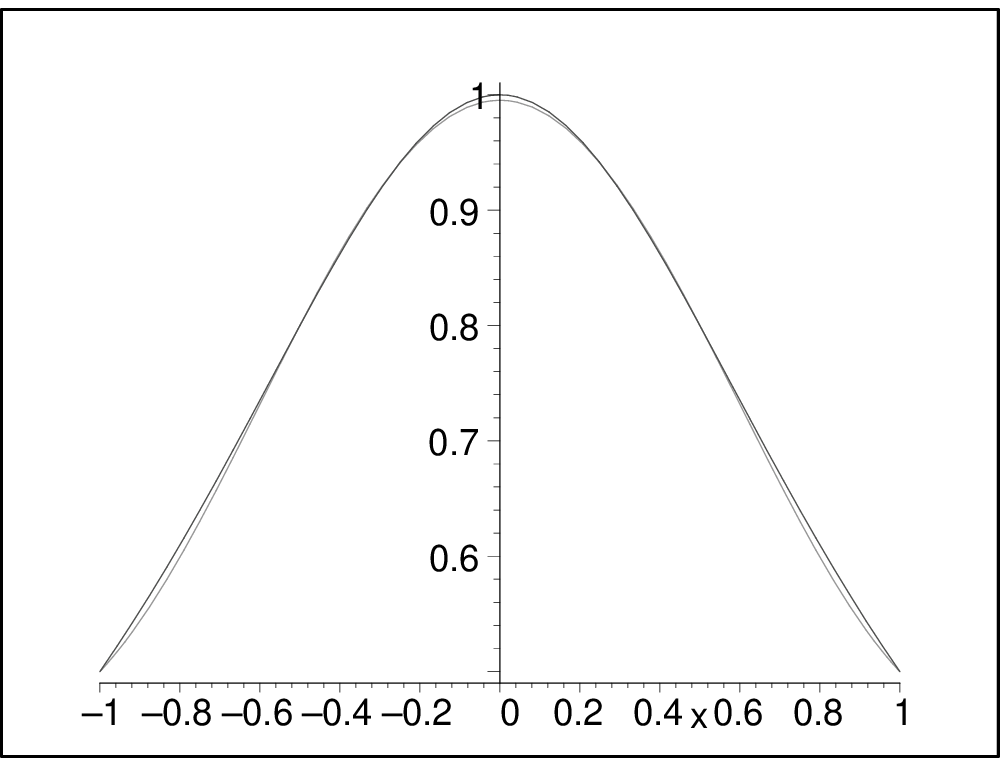}} }
\end{pspicture}

\end{document}